\newtheorem{theorem}{Theorem}[section]
\newtheorem{lemma}[theorem]{Lemma}
\newtheorem{corollary}[theorem]{Corollary}
\theoremstyle{definition}
\newtheorem{definition}[theorem]{Definition}
\newtheorem{remark}[theorem]{Remark}
\newtheorem{example}[theorem]{Example}
\newtheorem{question}{Question}
\newcommand{\p}{{\mathfrak p}}
\newcommand{\g}{{\mathfrak g}}
\newcommand{\x}{\underline x}
\newcommand{\y}{\underline y}
\newcommand{\ind}{\operatorname{ind}}
\newcommand{\spn}{\operatorname{span}}
\begin{document}
\frenchspacing

%  Headings
%\renewcommand{\evenhead}{V.  Coll, A. Giaquinto and C. Magnant}
%\renewcommand{\oddhead}{Meanders and Frobenius seaweed algebras}

%  Titlepage

%\thispagestyle{empty}

% Editorial
%\FirstPageHead{*}{*}{20**}{\pageref{firstpage}--\pageref{lastpage}}
%  Parameters: Volume, number, year, page range, paper type
% End Editorial

%\label{firstpage}
% Note for Author!
% Insert  \label{lastpage} at the end of the file

\title{Meander graphs and Frobenius Seaweed Lie algebras}
% author one information
\author{Vincent Coll}
\address{Department of Mathematics\\College of Arts and Sciences\\Lehigh University\\Bethlehem, PA 18015}
\email{vec208@lehigh.edu}

\author{Anthony Giaquinto}
\address{Department of Mathematics and Statistics\\College of Arts and Sciences\\
Loyola University Chi\-cago\\
Chicago, Illinois 60626 USA}
%\curraddr{}
%\thanks{}
\email{tonyg@math.luc.edu}

\author{Colton Magnant}
\address{Department of Mathematics\\College of Arts and Sciences\\Lehigh University\\Bethlehem, PA 18015}
\email{dr.colton.magnant@gmail.com}

%\begin{document}
\maketitle

\begin{abstract}
The index of a seaweed Lie algebra can be computed from its associated
meander graph. We examine this graph in several ways with a goal of
determining families of Frobenius (index zero) seaweed algebras. Our
analysis gives two new families of Frobenius seaweed algebras as well
as elementary proofs of known families of such Lie algebras.
\par\smallskip\noindent
{\bf 2000 MSC:} 17B05, 17B08
\end{abstract}

\section{Introduction}\noindent
Let $\mathfrak L$ be a Lie algebra over a field of characteristic
zero. For any functional $F\in \mathfrak L^*$ there is an associated
skew bilinear form $B_F$ on $\mathfrak L$ defined by $B_F(x,y)=F([x,y])$ for
$x,y\in \mathfrak L$. The index of $\mathfrak L$ is defined to be
$$\ind \mathfrak L = \min_{F\in \mathfrak L^*}\dim (\ker (B_F)).$$ The
Lie algebra $\mathfrak L$ is {\it{Frobenius}} if $\dim \mathfrak =0$;
equivalently, if there is a functional $F\in \mathfrak L^*$ such
that $B_F(-,-)$ is non-degenerate.

Frobenius Lie algebras were first studied by Ooms in \cite{Ooms} where
he proved that the universal enveloping algebra $U\mathfrak L$ is
primitive (i.e.\ admits a faithful simple module) provided that
$\mathfrak L$ is Frobenius and that the converse holds when $\mathfrak
L$ is algebraic. The relevance of Frobenius Lie algebras to
deformation and quantum group theory stems from their relation to the
classical Yang-Baxter equation (CYBE). Suppose $B_F(-,-)$ is
non-degenerate and let $M$ be the matrix of $B_F(-.-)$ relative to
some basis $\{x_1,\ldots , x_n\}$ of $\mathfrak L$. Belavin and
Drinfel'd showed that $r=\sum_{i,j}(M^{-1})_{ij}x_i\wedge x_j$ is a
(constant) solution of the CYBE, see \cite{BD}. Thus, each pair
consisting of a Lie algebra $\mathfrak L$ together with functional
$F\in \mathfrak L^*$ such that $B_F$ is non-degenerate provides a
solution to the CYBE, see \cite{GG:Boundary} and \cite{GG:Frob} for
examples.

The index of a semisimple Lie algebra $\g$ is equal to its rank and
thus such algebras can never be Frobenius. However, there always exist
subalgebras of $\g$ which are Frobenius. In particular, many amongst
the class of {\it biparabolic} subalgebras of $\g$ are Frobenius. A
biparabolic subalgebra is the intersection of two parabolic
subalgebras whose sum is $\g$. They were first introduced in the case
$\g=\mathfrak{sl}(n)$ by Dergachev and Kirillov in \cite{DK} where
they were called Lie algebras of {\it seaweed} type. Associated to
each seaweed algebra is a certain graph called the {\it meander}. One
of the main results of \cite{DK} is that the algebra's index is
determined by graph-theoretical properties of its meander, see Section
\ref{sec:meander} for details.

Using different methods, Panyushev developed an inductive procedure
for computing the index of seaweed subalgebras, see
\cite{Panyushev}. In the same paper, he exhibits a closed form for the
index of a biparabolic subalgebra of $\mathfrak{sp}(n)$.  One may also see \cite{Khoroshkin, Stolin}.

Tauvel and Yu found in \cite{Tauvel-Yu} an upper bound for the index
of a biparabolic subalgebra of an arbitrary semisimple Lie algebra,
and they conjectured that this was an equality. Joseph proved the
Tauvel-Yu conjecture in \cite{Joseph}.

The methods of \cite{DK}, \cite{Panyushev}, \cite{Tauvel-Yu}, and
\cite{Joseph} are all combinatorial in nature. Yet even with the this
theory available, it is difficult in practice to
implement this theory to find families of Frobenius biparabolic Lie
algebras. In contrast, for many cases it is known explicitly which
biparabolic algebras have the maximum possible index. For example, the
only biparabolics in $\mathfrak{sl}(n)$ and $\mathfrak{sp}(n)$ which
have maximal index are the Levi subalgebras. In contrast, the problem
of determining the biparabolics of minimal index is an open question
in all cases.

Our focus in this note is on the seaweed Lie algebras --
these are the biparabolic subalgebras of $\mathfrak{sl}(n)$. The only
known families of Frobenius seaweed Lie algebras that seem to be in the
literature will be outlined in Section \ref{sec:families}, although
the unpublished preprint \cite{Elashvili2} may offer more examples.
We shall examine these families using the meander graphs of Dergachev
and Kirillov. Our methodology provides new proofs that these algebras
are indeed Frobenius. We also exhibit a new infinite family of
Frobenius seaweed Lie algebras in Section~\ref{sec:submaximal}.

\section{Seaweed Lie algebras}

In this section we introduce the seaweed Lie algebras of
\cite{DK}. Recall that a composition of a positive integer $n$ is an
unordered partition $\underline x=(a_1,\ldots, a_m)$. That is,
each $a_i\geq 0$ and $\sum a_i = n$.

\begin{definition} Let $V$ be an $n$-dimensional vector space with a basis
$e_1,\ldots, e_n$. Let $\underline x=(a_1,\ldots, a_m)$ and $\underline y =(b_1,\ldots, b_t)$ be two compositions of $n$ and consider the flags
$$\{0\}\subset V_1\subset \cdots \subset V_{m-1}\subset V_m =V\quad
  \mbox{and}\quad V=W_0\supset W_1\supset \cdots \supset W_t=\{0\}$$
  where $V_i=\spn\{e_1,\ldots, e_{a_1+\cdots +a_i}\}$ and
  $W_j=\spn\{e_{{b_1}+\cdots +e_{b_j+1}}, \ldots e_n\}$. The
  subalgebra of $\mathfrak{sl}(n)$ preserving these flags is called a
  seaweed Lie algebra and is denoted $\p(\underline x \mid \underline
  y)$.
\end{definition}

A basis-free definition is available but is not necessary for the
present discussion. The name seaweed Lie algebra was chosen due to
their suggestive shape when exhibited in matrix form. For example, the
algebra $\p(3,1,3,2\,\,|\,\,4,2,3)$ consists of traceless matrices of the form

$$\begin{bmatrix}
*&*&*&*&\cdot&\cdot&\cdot&\cdot&\cdot\\
*&*&*&*&\cdot&\cdot&\cdot&\cdot&\cdot\\
*&*&*&*&\cdot&\cdot&\cdot&\cdot\\
\cdot&\cdot&\cdot&*&*&*&\cdot&\cdot&\cdot\\
\cdot&\cdot&\cdot&\cdot&*&*&\cdot&\cdot&\cdot\\
\cdot&\cdot&\cdot&\cdot&*&*&\cdot&\cdot&\cdot\\
\cdot&\cdot&\cdot&\cdot&*&*&*&*&*\\
\cdot&\cdot&\cdot&\cdot&\cdot&\cdot&*&*&*\\
\cdot&\cdot&\cdot&\cdot&\cdot&\cdot&*&*&*\\
\end{bmatrix}$$
where the entries marked by the dots are zero.

Many important subalgebras of $\mathfrak{sl}(n)$ are of
seaweed type, as illustrated in the following example.
\begin{example}\label{seaweed-examples}
\begin{itemize}
\item The entire algebra $\mathfrak{sl}(n)=\p(n\,\,|\,\,n)$ has index $n-1$.
\item The Cartan subalgebra of traceless diagonal matrices is
  $\p(\underline 1\,\,|\,\, \underline 1)$, where $\underline 1 =
  (1,1,\ldots, 1)$ and has index $n-1$.
\item The Borel subalgebra is $\p(\underline 1\,\, |\,\, n)$
  and has index $\lfloor (n+1)/2\rfloor$.
\item A maximal parabolic subalgebra is of the form
  $\p(a,b\,\,|\,\,n)$. Elashvili proved in \cite{Elashvili} that
  its index is $\gcd (a,n) -1$.
\end{itemize}
\end{example}

The only explicitly known Frobenius examples in the above list are the maximal
parabolic algebras $\p(a,b\,\,|\,\,n)$ with $a$ and $n$ relatively
prime. Of course, 
another infinite family of Frobenius seaweed algebras occurs
when $\underline a = (2,\ldots, 2,1)$, $\underline b =(1,2,\ldots 2)$,
and $n$ is odd. A similar case is $\underline a=(1,2\ldots, 2,1)$,
$\underline b =(2,\ldots,2)$, and $n$ is even. These two families are
detailed in \cite{Panyushev}.

A tantalizing question is how
to classify which seaweed algebras are Frobenius, especially given
their importance in the general theory of Lie algebras and
applications to deformations and quantum groups.

\section{Meanders}\label{sec:meander}

As stated earlier, Dergachev and Kirillov have developed a
combinatorial algorithm to compute the index of an arbitrary
$\p(\underline x\,\, |\,\, \underline y)$ from its associated {\it
  meander} graph $M(\underline x\,\,|\,\,\underline y)$ determined by
the compositions $\underline x$ and $\underline y$. The vertices of
$M(\underline x \,\,|\,\, \underline y)$ consist of $n$ ordered points
on a horizontal line, which can be called $1,2,\ldots,n$. The edges
are arcs above and below the line connecting pairs of different
vertices.

More specifically, the composition $\underline x =
(a_1,\ldots, a_m)$ determines arcs above the line which we will call
the top edges. The component $a_1$ of $\underline x$ determines
$\lfloor a_1/2\rfloor$ arcs above vertices $1,\ldots, a_1$. The arcs
are obtained by connecting vertex 1 to vertex $a_1$, vertex 2 to
vertex $a_{1}-1$, and so on. If $a_1$ is odd then vertex $a_{\lceil
  a_1/2\rceil}$ has no arc above it. For the component $a_2$ of
$\underline a$, we do the same procedure over vertices $a_1+1,\ldots,
a_1+a_2$, and continue with the higher $a_i$.

The arcs corresponding to $\underline y=(b_1,\ldots,b_t)$ are drawn
with the same rule but are under the line containing the
vertices. These are called the bottom edges.

It is easy to see that every meander consists of
a disjoint union of cycles, paths, and isolated points, but not all of these are necessarily present in any given meander.

\begin{theorem}[Dergachev-Kirillov] The index of the Lie algebra of seaweed type $\p(\underline a\,\,|\,\,\underline b)$ is equal to the number of connected components in the meander plus the number of closed cycles minus 1.
\end{theorem}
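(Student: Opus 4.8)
The plan is to compute the index through coadjoint stabilizers and to match the resulting dimension count against the graph-theoretic data of the meander. Throughout, write $c$ for the number of connected components of $M(\underline a\,|\,\underline b)$ and $k$ for the number of its closed cycles.

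First I would pass from $\mathfrak{sl}(n)$ to $\mathfrak{gl}(n)$: setting $\widehat{\p}=\p\oplus\mathbb{C}I$, the identity is central, so $\ind\widehat{\p}=\ind\p+1$, and it is enough to prove $\ind\widehat{\p}=c+k$. I would then use the standard facts that $\ker B_F$ coincides with the coadjoint stabilizer $\widehat{\p}^{\,F}=\{x\in\widehat{\p}:F([x,\widehat{\p}])=0\}$ and that $\dim\widehat{\p}^{\,F}$ attains its minimum $\ind\widehat{\p}$ on a nonempty Zariski-open set of functionals; hence it suffices to produce one functional realizing the value $c+k$ together with a matching lower bound valid for all $F$. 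Identifying $\widehat{\p}^{\,*}$ with matrices through the trace form, $F=F_A\colon X\mapsto\operatorname{tr}(AX)$, one finds $\widehat{\p}^{\,F_A}=\{X\in\widehat{\p}:[A,X]\perp\widehat{\p}\}$, which is a linear condition on the entries of $[A,X]$ at the support positions of $\widehat{\p}$ (recall $\widehat{\p}$ contains every diagonal matrix).

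Next I would take $A=A_0$ to be the zero-one matrix whose support is exactly the set of matrix positions of $\widehat{\p}$ picked out by the arcs of the meander, one entry per arc, and put $F_0=F_{A_0}$. The point of this choice is that each vertex of $M(\underline a\,|\,\underline b)$ is the endpoint of at most one top arc and at most one bottom arc, so the linear system defining $\widehat{\p}^{\,F_0}$ decouples as a direct sum indexed by the connected components of the meander: within a component the equations link the diagonal coordinates of its vertices to one another and link the off-diagonal coordinates sitting at its arcs to one another, with a sign change each time an arc is traversed. Solving component by component, an isolated vertex leaves its single diagonal coordinate free and contributes $1$; along a path the two degree-one endpoints force the off-diagonal parameter to vanish, leaving only the diagonal parameter, so it contributes $1$; a cycle, having no endpoints, keeps both a diagonal and an off-diagonal parameter and contributes $2$. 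Summing over components gives $\dim\widehat{\p}^{\,F_0}=c+k$, hence $\ind\widehat{\p}\le c+k$.

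The hard part is the reverse inequality $\dim\widehat{\p}^{\,F}\ge c+k$ for every $F$ --- equivalently, that $F_0$ is a regular functional, i.e., that $B_{F_0}$ has maximal rank. I would establish this by induction on $n$ using a short list of reduction moves on the pair $(\underline a\,|\,\underline b)$: assuming without loss that $a_1\ge b_1$, one folds the first block of $\underline b$ into the head of $\underline a$ to pass to a seaweed on a strictly smaller index set, and then checks that this move changes $n$ and $\ind\widehat{\p}$ by amounts matching the corresponding change in $c+k$. The base case is $n=1$, where $\widehat{\p}=\mathfrak{gl}(1)$ has index $1$ and the meander is a single point. Making the bookkeeping of components and cycles agree with the index recursion at every reduction step is the one genuinely delicate point; the remaining steps are formal.
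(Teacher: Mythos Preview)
The paper does not prove this theorem. It is stated as a result of Dergachev and Kirillov and attributed to \cite{DK}; the only commentary the paper adds is the remark explaining the ``$-1$'' coming from the passage $\mathfrak{gl}(n)\to\mathfrak{sl}(n)$. So there is no in-paper proof to compare your proposal against.

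That said, your outline is essentially the original Dergachev--Kirillov strategy: pass to $\mathfrak{gl}(n)$, pick the explicit functional supported on the arc positions, compute its coadjoint stabilizer to get the upper bound $\ind\widehat\p\le c+k$, and then argue regularity of that functional by an inductive reduction on the compositions. Your component-by-component count (isolated point $\mapsto 1$, path $\mapsto 1$, cycle $\mapsto 2$) does produce $c+k$, and the reduction moves you allude to are the ones Panyushev formalizes in \cite{Panyushev}.

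Where your write-up is thin: the assertion that the linear system for $\widehat\p^{\,F_0}$ ``decouples as a direct sum indexed by the connected components of the meander'' is the heart of the upper-bound step and is not obvious from what you wrote. The condition $[A_0,X]\perp\widehat\p$ is a constraint at \emph{every} support position of $\widehat\p$, not just at the arc positions, and one has to check that the non-arc constraints are automatically satisfied once the arc-driven ones are---this uses the specific block structure of the seaweed and the placement of the $1$'s in $A_0$. Likewise, for the lower bound you correctly flag the bookkeeping of how $c+k$ changes under a fold as ``the one genuinely delicate point,'' but you do not carry it out; that verification is the actual content of the inductive proof. As written, your proposal is a correct plan rather than a proof.
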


\begin{remark} The presence of the minus one in the theorem is due to our use of seaweed subalgebras of $\mathfrak{sl}(n)$ rather than of $\mathfrak{gl}(n)$ as used by Dergachev and Kirillov \cite{DK}. The index drops by one by the
restriction to $\mathfrak{sl}(n)$ from $\mathfrak{gl}(n)$.
\end{remark}

\begin{example}\label{modified}  Figure~\ref{meanderfig1} shows the meander $M(\underline x\,\,|\,\,\underline y)$ corresponding to the compositions
$\underline x = (5,2,2)$ and $\underline y = (2,4,3)$.

\begin{figure}[ht!]
\begin{center}
 \epsfbox{figs.1}
\caption{$M(5, 2, 2\,\,|\,\,2, 4, 3)$. \label{meanderfig1}}
\end{center}
\end{figure}

%\begin{center}
%\setlength{\unitlength}{1cm}
%\begin{picture}(9,2)
%\multiput(0,0)(1,0){9}{\circle*{0.1}}
%\put(2,0){\oval(4,1.5)[t]}
%\put(2,0){\oval(2,1)[t]}
%\put(5.5,0){\oval(1,1)[t]}
%\put(7.5,0){\oval(1,1)[t]}
%\put(0.5,0){\oval(1,1)[b]}
%\put(3.5,0){\oval(3,1.5)[b]}
%\put(3.5,0){\oval(1,1)[b]}
%\put(7,0){\oval(2,1)[b]}
%\end{picture}
%\end{center}

\bigskip

We see that there is a single path and a single cycle. Using the
theorem above, the index is $2+1-1=2$. Hence, $\mathfrak{p}(5, 2, 2\,\, |\,\, 2, 4, 3)$ is not a Frobenius
algebra.

\end{example}

It is easy to see that to obtain a Frobenius algebra, the only
possibility for the meander is that it consist of a single path with
no cycles and no isolated points. The following illustrates this
point.

\begin{example}\label{examplea}
Consider the algebra
$\p(3,2,2\,\, | \,\, 2,5)$. Its meander is given in Figure~\ref{meanderfig2}.

\begin{figure}[ht!]
\begin{center}
 \epsfbox{figs.2}
 \caption{$M(3, 2, 2\,\,|\,\,2, 5)$. \label{meanderfig2}}
\end{center}
\end{figure}

%\begin{center}
%\setlength{\unitlength}{1cm}
%\begin{picture}(9,2)
%\multiput(0,0)(1,0){7}{\circle*{0.1}}
%\put(1,0){\oval(2,1)[t]}
%\put(3.5,0){\oval(1,1)[t]}
%\put(5.5,0){\oval(1,1)[t]}
%\put(0.5,0){\oval(1,1)[b]}
%\put(4,0){\oval(4,1.5)[b]}
%\put(4,0){\oval(2,1)[b]}
%\end{picture}
%\end{center}

\bigskip

Labeling the vertices with $\{1, 2, \dots, n\}$ from left to right, notice that $M(3,2,2\,\,|\,\,2,5)$  is the single path $2,1,3,7,6,4,5$ (if we
start with $2$) or its reversal $5,4,6,7,3,1,2$ if we start with $5$. In particular, the index is $1-1=0$ and so this is a Frobenius algebra.
\end{example}

\begin{question} What are the conditions on the compositions $\underline x$ and $\underline y$ so that the meander $M(\underline x\,\,|\,\,\underline y)$ consists of a single path with no cycles or isolated points?
\end{question}

As stated, this seems to be an elementary question involving nothing
more that the basics of graph theory.  However, the apparent
simplicity of the question is misleading since an answer would provide
a complete classification of Frobenius seaweed algebras - a difficult
problem.  Even so, it is easy to give some necessary conditions on
$\underline x =(a_1,\ldots ,a_m)$ and $\underline y = (b_1, \ldots,
b_t)$ for $M(\underline x\,\,|\,\, \underline y)$ to be a single
path. For example, exactly two elements of the set $\{a_1,\ldots,
a_m,b_1,\ldots,b_t)$ must be odd. This is because a path must have a
starting and ending point, and these corresponds to vertices of degree
one. A vertex of degree one is either missing a top edge or bottom
edge connecting to it, and this happens only if some $a_i$ or $b_j$ is
odd.

Another necessary condition for $M(\underline x\,\,|\,\, \underline
y)$ to be a single path is that $a_1\neq b_1$. In this
case, $$\p(\underline x \,\, | \,\,\underline y)\simeq
\mathfrak{sl}(a_1)\bigoplus \p(a_2,\ldots,
a_m\,\,|\,\,b_2,\ldots,b_t)$$ and thus $\p(\underline x \,\, |
\,\,\underline y)$ is not Frobenius since the index is additive for
direct sums of Lie algebras. More generally, if $\sum_{i=1}^ra_i
=\sum_{j=1}^rb_j$ for some $r\leq \min \{m,t\}$ then the meander is
not a single path.  Other necessary conditions can be given, but none
seems to shed light on what is sufficient.

\section{Families of Frobenius seaweed algebras}\label{sec:families}

In this section we revisit some known families of Frobenius seaweed
algebras in terms of meanders. At the end we also provide two new families.

 First consider Panyushev's example with $\underline x = (2,\ldots,
 2,1)$, $\underline y =(1,2,\ldots 2)$, and $n$ is odd. Again,
 numbering as in Example~\ref{examplea}, the top edges connect 2 to 4,
 4 to 6, etc.\ while the bottom edges connect 1 to 3, 3 to 5,
 etc. Hence, the meander consists of the single path $1,2,\ldots,
 n$. A similar argument verifies that the meander for $\underline
 x=(1,2\ldots, 2,1)$ and $\underline y =(2,\ldots,2)$ with $n$ even is
 also the path $1,2,\ldots, n$.

To analyze some other cases it is convenient to modify the definition
of the meander $M(\underline x \,\,|\,\,\underline y)$.

\begin{definition} Suppose $\x$ and $\y$ are compositions of $n$. The modified meander $M'(\x \,\,| \,\,\y)$ is the graph $M(\x\,\,|\,\,\y)$ appended with a loop corresponding to each odd $a_i$ and $b_j$.  Specifically, for all odd $a_i$, add
a loop connecting $a_1+\cdots +a_{i-1}+\lceil a_i/2\rceil $ to
itself. Similarly, for all odd $b_j$, add a bottom loop connecting
$b_1+\cdots +b_{j-1}+\lceil b_j/2\rceil$ to itself.
\end{definition}

Note that in $M'(\underline x \,\, | \,\, \underline y)$ each vertex
is incident with exactly one top and one bottom edge or loop.

\begin{example}\label{modified2}
Below is the modified meander $M'(5,2,2\,\, |\,\, 2,4,3)$. Compare
with the meander $M(5,2,2\,\,|\,\,2,4,3)$ given in Example \ref{modified}.

\begin{figure}[ht!]
\begin{center}
 \epsfbox{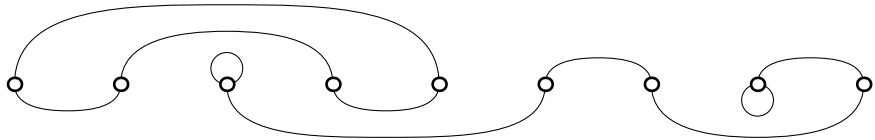}
 \caption{$M(5, 2, 2\,\,|\,\,2, 4, 3)$ with loops. \label{meanderfig3}}
\end{center}
\end{figure}

%\begin{center}
%\setlength{\unitlength}{1cm}
%\begin{picture}(9,2)
%\multiput(0,0)(1,0){9}{\circle*{0.1}}
%\put(2,0){\oval(4,1.5)[t]}
%\put(2,0){\oval(2,1)[t]}
%\put(2,.2){\circle{.3}}
%\put(5.5,0){\oval(1,1)[t]}
%\put(7.5,0){\oval(1,1)[t]}
%\put(0.5,0){\oval(1,1)[b]}
%\put(3.5,0){\oval(3,1.5)[b]}
%\put(3.5,0){\oval(1,1)[b]}
%\put(7,0){\oval(2,1)[b]}
%\put(7,-.2){\circle{.3}}
%\end{picture}
%\end{center}

\bigskip

\end{example}

\subsection{The top and bottom bijections} Each modified meander determines two bijections of $S=\{1,2,\ldots, n\}$ to itself. Define a
``top'' bijection $t$ of $S$ by $t(i)=i$, where $j$ is the unique
vertex incident with the same top edge as $i$. If $i$ is joined to
itself by a top loop, then $t(i)=1$.  In a similar way, define a
``bottom'' bijection $b$ of $S$ by $b(i)=j$, where $j$ is the unique vertex
incident with the same bottom edge as $j$. If $i$ is joined to itself
by a bottom loop, then $b(i)=1$.  Clearly the maps $t$ and $b$ are
well-defined. For instance, in Example \ref{modified2}, we have
$t(3)=3$ and $b(3)=6$.

\begin{definition}
Let $\x$ and $\y$ be compositions of $n$. The meander permutation
$\sigma_{\x,\y}\in S_n$ is the permutation $t\circ b$ of $S$. That is,
$\sigma_{\x,\y}(i)=t(b(i))$.
\end{definition}

\begin{example} Consider the meander permutation $\sigma_{\x,\y}$ with $\x$ and
$\y$ as in Example~\ref{modified2}.  We can write $\sigma_{\x,\y}$ as a product of disjoint cycles in $S_{n}$: $(1,4)(2,5)(3,7,8,9,6)$ (note the different use of the term ``cycle'').
\end{example}

\begin{theorem}\label{cycle} Suppose $\underline x$ and $\underline y$ are compositions of $n$.  Then the meander $M(\underline x\,\,|\,\, \underline y)$ is a single path if and only if the meander permutation $\sigma_{\x,\y}$ is an $n$-cycle in $S_n$.
\end{theorem}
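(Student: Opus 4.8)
The plan is to track what the meander permutation $\sigma_{\x,\y} = t\circ b$ does to the structure of the meander graph, using the key observation recorded just before the definition of the top and bottom bijections: in the modified meander $M'(\x\mid\y)$ every vertex meets exactly one top edge-or-loop and exactly one bottom edge-or-loop. First I would make precise how a walk in $M(\x\mid\y)$ is encoded by $\sigma_{\x,\y}$. Starting from a vertex $i$, applying $b$ moves along the unique bottom edge at $i$, then applying $t$ moves along the unique top edge at $b(i)$; so the orbit of $i$ under repeated application of $\sigma_{\x,\y}$ traces out precisely the sequence of vertices obtained by alternately following bottom edges and top edges. The subtlety is the role of the loops: a loop at vertex $v$ (arising from an odd part) sends $v\mapsto 1$ under $t$ or $b$. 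I would treat vertex $1$ and, more carefully, the vertices carrying loops as the ``endpoints'' of the path structure, and show that the convention $t(i)=1$, $b(i)=1$ on loops has exactly the effect of splicing the (at most two) path-ends together into a single cycle of the permutation whenever — and only when — those ends belong to the same path of $M(\x\mid\y)$.

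The core of the argument splits into the two implications. For the forward direction, suppose $M(\x\mid\y)$ is a single path. By the necessary conditions already discussed in the excerpt, exactly two of the parts $a_i, b_j$ are odd, so $M'(\x\mid\y)$ has exactly two loops, located at the two degree-one vertices of the path (the ones missing a top or bottom edge); every other vertex has genuine top and bottom edges. Following the alternating top/bottom walk from any vertex, one traverses the whole path; upon reaching an endpoint the relevant loop redirects the walk to vertex $1$, and because vertex $1$ is itself an endpoint of the path (it is missing either its top or bottom partner precisely when... — here I would check that the smallest-index endpoint is handled correctly by the $\mapsto 1$ convention), the walk continues back through the path and closes up. Counting shows every vertex is visited exactly once before returning to the start, so $\sigma_{\x,\y}$ is an $n$-cycle. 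For the converse, I would argue contrapositively: if $M(\x\mid\y)$ is \emph{not} a single path, then it is a disjoint union of paths, cycles, and isolated points with at least two components. A genuine cycle component of $M(\x\mid\y)$ (all of whose vertices have degree two, hence carry real top and bottom edges, hence no loops) gives rise under the alternating walk to a $\sigma_{\x,\y}$-orbit contained entirely within that cycle's vertex set — a proper subset of $S$ — so $\sigma_{\x,\y}$ is not an $n$-cycle. An isolated point $v$ in $M(\x\mid\y)$ means both a top loop and a bottom loop at $v$, whence $b(v)=1$ and then $t(1)=\cdots$; I would check this forces a short orbit unless $v=1$, and handle the degenerate possibilities. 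If $M(\x\mid\y)$ is a union of two or more paths, there are at least four degree-one vertices but only two odd parts can supply loops — so in fact this case cannot occur under the standing ``exactly two odd parts'' count, or if more than two parts are odd the loop-count forces several short orbits; either way $\sigma_{\x,\y}$ fails to be an $n$-cycle. Reassembling these cases gives the converse.

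The step I expect to be the main obstacle is the careful bookkeeping around the loop convention $t(i)=b(i)=1$ and the special status of vertex $1$: I need to verify that redirecting a path-end to vertex $1$ really does stitch the two ends of a single path into one permutation cycle (rather than, say, creating a spurious short cycle through $1$), and that when there are multiple components the same convention genuinely \emph{fails} to merge them into one $n$-cycle. Getting the boundary case where vertex $1$ is (or is not) one of the two loop-bearing endpoints exactly right — and confirming the walk's length is $n$ in the single-path case — is the delicate part; the rest is a routine translation between "edge in $M'$'' and ``value of $t$ or $b$.''
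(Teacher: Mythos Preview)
Your proposal is built on a misreading of the loop convention. Where the paper writes ``$t(i)=1$'' and ``$b(i)=1$'' for vertices carrying loops, these are typos for $t(i)=i$ and $b(i)=i$: a loop \emph{fixes} its vertex. This is confirmed by the worked example immediately following the definition (in $M'(5,2,2\mid 2,4,3)$ the top loop at vertex $3$ gives $t(3)=3$, not $t(3)=1$) and by the paper's own proof of the theorem, where an isolated point of $M(\x\mid\y)$ is declared to be a \emph{fixed point} of $\sigma_{\x,\y}$. Under your literal reading the maps $t$ and $b$ are not even bijections --- in that same example one would have $t(3)=1=t(5)$ --- so $\sigma_{\x,\y}=t\circ b$ would not be a permutation at all, and the entire ``vertex $1$ as splice point'' mechanism collapses. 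The obstacle you flag at the end (that vertex $1$ need not be a path endpoint) is a symptom of this misreading, not a genuine subtlety of the argument.

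With the correct convention the proof is much simpler and follows the paper. If $M(\x\mid\y)$ is the single path $a_1,\ldots,a_n$ with $b(a_1)=a_2$, then $a_1$ carries a top loop, so $t(a_1)=a_1$; iterating $\sigma=t\circ b$ from $a_1$ walks forward through the odd-indexed $a_1,a_3,a_5,\ldots$ until the far endpoint, whose loop fixes it and sends the walk back through the even-indexed vertices, producing a single $n$-cycle. For the converse the paper argues by cases on the components of $M(\x\mid\y)$: an isolated point is a fixed point of $\sigma$; a path of length $k<n$ contributes a $k$-cycle to $\sigma$ by the same computation; a cycle component of length $k$ contributes cycles of length at most $k$, with the residual possibility $k=n$ eliminated by parity. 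Your sketch of the converse also goes astray independently of the loop issue: you invoke ``the standing `exactly two odd parts' count'' when treating the multi-path case, but that count is a \emph{consequence} of $M(\x\mid\y)$ being a single path and is not available in the contrapositive direction --- a meander with two disjoint paths and four odd parts is perfectly possible and must be handled directly.
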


\begin{proof} Suppose the meander $M(\underline x\,\,|\,\, \underline y)$ is the single path $a_1, a_2, \ldots, a_n$. By switching $\x$ and $\y$ if necessary, we can assume that $b(a_1)=a_2$.
Then the meander permutation is the $n$-cycle $(a_1, a_3, \ldots
a_{n-1}, a_n, a_{n-2},\ldots a_2)$ if $n$ is even and if $n$ is odd it
is the $n$-cycle if $(a_1, a_3, \ldots, a_n, a_{n-1}, a_{n-3},\ldots,
a_2).$

Conversely suppose $\sigma_{\x,\y}$ is an $n$-cycle but
$M(\x\,\,|\,\,\y)$ is not a single path. Then $M(\x\,\,|\,\,\y)$
contains either an isolated point, a path of length less than $n$, or
a cycle. We shall show that each of these possibilities leads to a
contradiction.

If $i$ is an isolated point of $M(\x\,\,|\,\, \y)$, then it is a fixed
point of $\sigma_{\x,\y}$ which therefore can not be an $n$-cycle.

If $a_1, \ldots a_k$ is a path in $M(\x\,\,|\,\,\y)$ with $k<n$ then,
depending on whether $k$ is even or odd, either the $(a_1, a_3, \ldots
a_{k-1}, a_k, a_{k-2},\ldots a_2)$ or $(a_1, a_3, \ldots, a_k,
a_{k-1}, a_{k-3},\ldots, a_2)$ appears in the cycle decomposition of
$\sigma_{\x,\y}$. Since $k<n$ we conclude that $\sigma_{\x,\y}$ is not
an $n$-cycle.

Now if $M(\x\,\,|\,\,\y)$ contains a cycle $a_1,a_2,\ldots, a_k,a_1,$
then the meander permutation contains either the $k/2$ cycle
$(a_1,a_3,\ldots,a_{n-1})$ if $n$ is even or the $k$-cycle $(a_1, a_3,
\ldots, a_n, a_2, a_4,\ldots, a_{n-1})$ if $n$ is odd. If $k<n$ then
$\sigma_{\x,\y}$ is not an $n$-cycle. If $k=n$ is even, then the same
argument shows that $\sigma_{\x,\y}$ is not an $n$-cycle. The
remaining case is that $k=n$ is odd. If this happens though, we must
have $M(\x\,\,|\,\,\y)=M'(\x\,\,|\,\,\y)$, and consequently all
components $a_i$ and $b_j$ are even. Since $\sum a_i=n$ we have a
contradiction. Thus, in all cases when $M(\x\,\,|\,\,\y)$ is not a
single path, the meander permutation $\sigma_{\x,\y}$ is not an
$n$-cycle, which is a contradiction. The proof is complete.
\end{proof}

\subsection{Maximal Parabolic Subalgebras} To generate more examples of Frobenius Lie algebras, we consider maximal parabolic seaweed subalgebras of $\mathfrak{sl}(n)$ which are necessarily of the form $\p(a,b\,\,|\,\,n)$.

\begin{lemma}\label{modn} Consider the compositions $\underline x=(a,b)$ and $\underline y = n$. The meander permutation $\sigma_{\x,\y}$ is the map
sending $i$ to $i+a \mod n$ for all $i$.
\end{lemma}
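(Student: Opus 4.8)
The plan is to unwind the two bijections $t$ and $b$ explicitly for the specific compositions $\x=(a,b)$ and $\y=(n)$, and then compose them. Since $\y$ is a single block of size $n$, the bottom arcs connect vertex $i$ to vertex $n+1-i$ for all $i$ (with a bottom loop at the middle vertex $(n+1)/2$ if $n$ is odd). Thus $b(i)=n+1-i$, except that if $n$ is odd and $i=(n+1)/2$ then $b(i)=1$ by the loop convention; I will want to note that this exceptional value is automatically consistent with the final formula, or handle it as a boundary case. For the top, the block $a_1=a$ connects $j$ to $a+1-j$ for $1\le j\le a$ (with a top loop at $\lceil a/2\rceil$ if $a$ is odd), and the block $a_2=b$ over vertices $a+1,\ldots,n$ connects $a+j$ to $n+1-j$, i.e. it sends a vertex $k$ in that range to $(a+1)+(n-k)=a+n+1-k$ (again with a loop in the middle if $b$ is odd, sending that vertex to $1$).

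The key computation is then $\sigma_{\x,\y}(i)=t(b(i))=t(n+1-i)$. I would split into the two cases according to whether $n+1-i$ lands in the first top block $\{1,\ldots,a\}$ or the second top block $\{a+1,\ldots,n\}$. If $n+1-i\le a$, i.e. $i\ge n+1-a$, then $t(n+1-i)=a+1-(n+1-i)=i+a-n=i+a\bmod n$ (since in this range $i+a>n$). If $n+1-i\ge a+1$, i.e. $i\le n-a$, then $t(n+1-i)=a+n+1-(n+1-i)=i+a=i+a\bmod n$ (since here $i+a\le n$). So in both generic cases $\sigma_{\x,\y}(i)\equiv i+a\pmod n$, which is exactly the claim.

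The main obstacle — really the only delicate point — is bookkeeping around the loops and the boundary vertices when $a$, $b$, or $n$ is odd, to confirm the formula $i\mapsto i+a\bmod n$ still holds there (for instance, checking that the vertex $i$ with $n+1-i=\lceil a/2\rceil$ produces $t(n+1-i)=1$, and that $\lceil a/2\rceil + a \equiv 1\pmod n$ fails in general, so one must instead observe that the loop convention $t(\cdot)=1$ is being used only in the modified meander and reconcile it with the stated map). Actually the cleanest route is to observe that the loops at $\lceil a/2\rceil$, at $a+\lceil b/2\rceil$, and at $(n+1)/2$ do not affect the cycle structure in a way that breaks the modular formula once one checks these three special indices directly; I would verify each by hand. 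Once the loop cases are dispatched, the lemma follows, and I would remark that since $i\mapsto i+a\bmod n$ is an $n$-cycle precisely when $\gcd(a,n)=1$, combining this with Theorem~\ref{cycle} immediately recovers Elashvili's result that $\p(a,b\mid n)$ is Frobenius iff $\gcd(a,n)=1$.
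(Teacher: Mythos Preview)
Your approach is exactly the paper's: write down $b(i)=n+1-i$ and the two-case formula for $t$, then compose and observe that both cases reduce to $i+a\bmod n$. The computation you sketch is correct and complete as it stands.

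Your worry about the loops is a red herring induced by a typo in the paper's definition of $t$ and $b$: the line ``If $i$ is joined to itself by a top loop, then $t(i)=1$'' should of course read $t(i)=i$ (a loop joins a vertex to itself, so the associated bijection fixes it). Once that is corrected, note that your explicit formulas already subsume the loop cases with no extra work: if $a$ is odd the loop vertex is $(a+1)/2$ and indeed $t((a+1)/2)=a+1-(a+1)/2=(a+1)/2$; similarly the formulas $t(k)=n+a+1-k$ and $b(i)=n+1-i$ fix the centers of the $b$-block and the $n$-block respectively. So there is no separate bookkeeping to do, and the paper's proof accordingly does not mention loops at all. Drop that paragraph and your write-up is clean.
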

\begin{proof}
By definition of the top and bottom maps, we have
$$b(i)=n+1-i \quad \mbox{and}\quad t(i)=\begin{cases}{a+1}&\mbox{if}\quad 1\leq i\leq a\\
n+a+1-i& \mbox{if} \quad a+1\leq i\leq n\end{cases}$$
and thus
$$t(b(i))=\begin{cases}a-n+i& \mbox{if}\quad 1\leq b(i)\leq a\\ a+i&
\mbox{if}\quad a+1\leq b(i)\leq n\end{cases}.$$ Therefore
$\sigma_{\x,\y}(i)=t(b(i))=i+a\mod n$.
\end{proof}

Recall Elashvili's result asserting that the maximal parabolic algebra
$\p(a,b\,\,|n)$
is Frobenius if and only if $\gcd(a,n)=1$. An immediate corollary of
the previous lemma gives a new simple proof of Elashvili's result.

\begin{corollary}\label{relprime}
The maximal parabolic algebra $\p(a,b\,\,|\,\,n)$ is Frobenius if and
only if $\gcd(a,n)=1$.
\end{corollary}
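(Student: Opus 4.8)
The plan is to combine Lemma~\ref{modn} with Theorem~\ref{cycle}. By Lemma~\ref{modn}, the meander permutation $\sigma_{\x,\y}$ for $\p(a,b\,\,|\,\,n)$ is exactly the map $i\mapsto i+a\pmod n$, i.e.\ the cyclic shift by $a$ on $\mathbb{Z}/n\mathbb{Z}$. By Theorem~\ref{cycle}, $\p(a,b\,\,|\,\,n)$ is Frobenius precisely when its meander is a single path, which happens if and only if $\sigma_{\x,\y}$ is an $n$-cycle in $S_n$. So the corollary reduces to the entirely elementary fact that the shift $i\mapsto i+a$ generates $\mathbb{Z}/n\mathbb{Z}$ (equivalently, is an $n$-cycle) if and only if $\gcd(a,n)=1$.

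Concretely, the steps I would carry out are: first, invoke Lemma~\ref{modn} to identify $\sigma_{\x,\y}$ with translation by $a$ modulo $n$. Second, recall that the order of the element $\bar a$ in the additive group $\mathbb{Z}/n\mathbb{Z}$ is $n/\gcd(a,n)$, so the orbit of any point under repeated addition of $a$ has size $n/\gcd(a,n)$; since translation by $a$ acts transitively on each such orbit and all orbits have the same size, $\sigma_{\x,\y}$ is a product of $\gcd(a,n)$ disjoint cycles each of length $n/\gcd(a,n)$. Third, conclude that $\sigma_{\x,\y}$ is a single $n$-cycle if and only if $\gcd(a,n)=1$. Fourth, apply Theorem~\ref{cycle} to translate "$\sigma_{\x,\y}$ is an $n$-cycle" into "$M(a,b\,\,|\,\,n)$ is a single path", and then apply the Dergachev--Kirillov theorem (the index is the number of components plus the number of cycles minus one) to conclude that a single path means index zero, i.e.\ Frobenius.

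There is no real obstacle here; the content is all in Lemma~\ref{modn} and Theorem~\ref{cycle}, and the corollary is genuinely a one-line deduction once those are in place. The only point requiring a word of care is to make sure we are in a position to apply Theorem~\ref{cycle} at all---that is, that $\p(a,b\,\,|\,\,n)$ with $\gcd(a,n)=1$ has a meander with no isolated points so that "single path" is the right condition. But an isolated point would be a fixed point of $\sigma_{\x,\y}$, and translation by $a$ on $\mathbb{Z}/n\mathbb{Z}$ has a fixed point only when $a\equiv 0$, i.e.\ $n\mid a$, which is excluded; so this is automatic. Hence the proof is simply: $\sigma_{\x,\y}$ is the shift by $a$ (Lemma~\ref{modn}), this is an $n$-cycle iff $\gcd(a,n)=1$ (elementary group theory), and by Theorem~\ref{cycle} together with the Dergachev--Kirillov index formula this is equivalent to $\p(a,b\,\,|\,\,n)$ being Frobenius.
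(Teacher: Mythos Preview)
Your proposal is correct and follows essentially the same approach as the paper: invoke Lemma~\ref{modn} to identify $\sigma_{\x,\y}$ with the shift $i\mapsto i+a\pmod n$, observe that this is an $n$-cycle precisely when $\gcd(a,n)=1$, and conclude via Theorem~\ref{cycle}. Your aside about isolated points is unnecessary, since Theorem~\ref{cycle} is already an unconditional equivalence (``single path'' $\Longleftrightarrow$ ``$n$-cycle''), but it does no harm.
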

\begin{proof}

By Theorem \ref{cycle} it suffices to show that the meander
permutation is an $n$-cycle. According to Lemma \ref{modn},
$\sigma_{\x,\y}(i)= i+a\mod n$ for all $i$. Thus, the meander
permutation is an $n$-cycle if and only if the sequence $i, i+a, i+2a, \ldots,
i+(n-1)a$ forms a complete residue system modulo $n$. This occurs
precisely when $\gcd(a,n)=1$. The proof is complete.
\end{proof}

\subsection{Opposite maximal parabolic subalgebras}\label{sec:opposite}
We now use the same ideas to present another family of Frobenius seaweed
algebras each of which is an intersection of a positive and negative
maximal parabolic algebra. Such algebras are of the form $\p(a,b
\,\,|\,\, c,d)$ and are called {\it opposite maximal parabolic
  subalgebras}.

\begin{lemma}Let $\x=(a,b)$ and $\y=(c,d)$ be compositions of $n$. The permutation meander $\sigma_{\x,\y}$ is the map sending $i$ to $a-c\mod n$ for all $i$.
\end{lemma}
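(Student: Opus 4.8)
The plan is to mimic the proof of Lemma~\ref{modn}, computing the top and bottom bijections $t$ and $b$ for the general opposite maximal parabolic case and then composing them. First I would write down the bottom bijection. Since $\y=(c,d)$ with $c+d=n$, the bottom arcs pair vertex $1$ with $c$, vertex $2$ with $c-1$, and so on within the first block, and vertex $c+1$ with $n$, vertex $c+2$ with $n-1$, and so on within the second block. Hence
\begin{equation*}
b(i)=\begin{cases} c+1-i & \text{if } 1\leq i\leq c,\\ n+c+1-i & \text{if } c+1\leq i\leq n,\end{cases}
\end{equation*}
with the understanding that an odd block sends its middle vertex to $1$ via the loop convention (so the two cases still agree with the stated formula modulo $n$). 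Likewise the top bijection for $\x=(a,b)$ is
\begin{equation*}
t(j)=\begin{cases} a+1-j & \text{if } 1\leq j\leq a,\\ n+a+1-j & \text{if } a+1\leq j\leq n.\end{cases}
\end{equation*}

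Next I would compute $\sigma_{\x,\y}(i)=t(b(i))$ by cases. In every case the inner and outer ``$+1$'' constants cancel and, working modulo $n$, one gets $t(b(i))\equiv a+1-(b(i)) \equiv a+1-(c+1-i) = a-c+i \pmod n$ in the first block, and similarly $t(b(i))\equiv n+a+1-(n+c+1-i)=a-c+i\pmod n$ in the second block; the remaining mixed cases (where $i$ lies in one block but $b(i)$ lands in the other) give the same answer modulo $n$ because each formula for $t$ differs from the other only by the additive constant $n$. So in all cases $\sigma_{\x,\y}(i)\equiv i+(a-c)\pmod n$, which is the claimed map (the statement should read $i\mapsto i+(a-c)\bmod n$). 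I would organize this as a short case table exactly as in Lemma~\ref{modn}, since the structure is identical.

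The only genuine subtlety — and the step I expect to need the most care — is the treatment of the loop convention when one or more of $a,b,c,d$ is odd, i.e. checking that the piecewise formulas for $t$ and $b$ are the correct descriptions of the top and bottom bijections even at the middle vertex of an odd block, where the bijection is defined to send that vertex to $1$ rather than to its ``mirror'' partner. Here I would note that for an odd block the mirror partner of the middle vertex is the middle vertex itself, so the naive reflection formula would give a fixed point; the loop convention instead reroutes it to $1$. One checks that this does not affect the residue-class computation: either one verifies directly that the discrepancy is absorbed, or — cleaner — one observes that since $\sigma_{\x,\y}$ must be a bijection of $S$ and the reflection formulas already define a bijection $i\mapsto i+(a-c)\bmod n$ on all of $S$, the two descriptions must coincide. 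With that in hand the lemma follows immediately, and (as with Corollary~\ref{relprime}) one gets as a consequence, via Theorem~\ref{cycle}, that $\p(a,b\mid c,d)$ is Frobenius exactly when $\gcd(a-c,n)=1$.
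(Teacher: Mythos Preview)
Your approach is essentially identical to the paper's: write down the piecewise reflection formulas for $t$ and $b$, then check the (at most four) cases of the composition $t\circ b$ and observe that each reduces to $i+(a-c)\bmod n$. The paper does exactly this, in slightly more telegraphic form.

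One remark: your worry about the loop convention is unnecessary. For an odd block the middle vertex is its own mirror image under the reflection formula (e.g.\ if $1\le i\le a$ and $a$ is odd, then $i=(a+1)/2$ gives $t(i)=a+1-i=i$), so the piecewise formulas already return the correct fixed point. The paper's phrase ``$t(i)=1$'' in the definition of the top bijection is a typo for $t(i)=i$; there is no rerouting to vertex $1$, and hence no discrepancy to absorb.
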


\begin{proof}
The bottom and top maps are given by
$$b(i)=\begin{cases}c+1-i & \mbox{if}\quad 1\leq i\leq c\\ n+c+1-i&
\mbox{if}\quad c+1\leq i\leq n\end{cases},$$
and
$$t(i)=\begin{cases}a+1-i & \mbox{if}\quad 1\leq i\leq a\\ n+a+1-i&
\mbox{if}\quad a+1\leq i\leq n\end{cases}.$$ There are four possible
compositions $t(b(i))$, depending on and whether $i\leq c$ or $i>c$ and
whether $b(i)\leq a$ or $b(i)>a$. It is an easy calculation to see
that in each case $t(b(i))=a-c+i\mod n$.
\end{proof}

An immediate consequence is the following result.

\begin{corollary}
The opposite maximal parabolic seaweed algebra $\p(a,b\,\,|\,\,c,d)$
is Frobenius if and only if $\gcd(a-c,n)=1$.
\end{corollary}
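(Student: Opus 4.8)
The plan is to combine the two results immediately preceding this corollary: the lemma just above, which identifies the meander permutation $\sigma_{\x,\y}$ for $\x=(a,b)$, $\y=(c,d)$ as the translation $i\mapsto i+(a-c)\bmod n$, together with Theorem~\ref{cycle}, which reduces the Frobenius property to the statement that $\sigma_{\x,\y}$ is an $n$-cycle. These two facts bracket the entire argument, so the proof will essentially mirror the proof of Corollary~\ref{relprime}, with $a$ replaced by $a-c$.

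First I would invoke Theorem~\ref{cycle}: the algebra $\p(a,b\,\,|\,\,c,d)$ is Frobenius precisely when its meander is a single path, which happens if and only if $\sigma_{\x,\y}$ is an $n$-cycle in $S_n$. Next I would apply the preceding lemma to rewrite $\sigma_{\x,\y}$ explicitly as the map $i\mapsto i+(a-c)\bmod n$ on $S=\{1,\dots,n\}$. Then I would observe that iterating this map starting from any $i$ produces the sequence $i,\ i+(a-c),\ i+2(a-c),\ \dots,\ i+(n-1)(a-c)$ modulo $n$, and that $\sigma_{\x,\y}$ is an $n$-cycle exactly when this sequence runs through a complete residue system modulo $n$. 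A standard elementary number-theory fact then finishes it: the cyclic subgroup of $\mathbb{Z}/n\mathbb{Z}$ generated by $a-c$ is all of $\mathbb{Z}/n\mathbb{Z}$ if and only if $\gcd(a-c,n)=1$.

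There is essentially no obstacle here, since the genuine content has already been established in the lemma and in Theorem~\ref{cycle}; the only point requiring a word of care is the edge case where $a-c$ is divisible by $n$ (equivalently $a-c\equiv 0$), for which $\sigma_{\x,\y}$ is the identity and hence manifestly not an $n$-cycle when $n>1$ — but this is consistent with $\gcd(a-c,n)=n\neq 1$, so the biconditional still holds. One might also remark that this corollary specializes to Corollary~\ref{relprime} when $(c,d)=(0,n)$ is degenerate, or more transparently that it subsumes Elashvili's criterion by taking $c=0$; but such remarks are not needed for the proof itself, which is complete once the displayed identity $\sigma_{\x,\y}(i)=i+(a-c)\bmod n$ is fed into the residue-system criterion.
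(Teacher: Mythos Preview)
Your proposal is correct and follows precisely the paper's own argument: invoke Theorem~\ref{cycle}, apply the preceding lemma to identify $\sigma_{\x,\y}$ as $i\mapsto i+(a-c)\bmod n$, and note this is an $n$-cycle exactly when $i,\,i+(a-c),\,\ldots,\,i+(n-1)(a-c)$ is a complete residue system modulo $n$, i.e.\ when $\gcd(a-c,n)=1$. The paper's proof is essentially one sentence pointing back to Corollary~\ref{relprime}, and your write-up simply unpacks that same reasoning.
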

\begin{proof}
The argument is exactly as that used in
Corollary~\ref{relprime}. Namely, that the meander permutation is an
$n$-cycle if and only if the sequence $i, i+(a-c), i+2(a-c), \ldots ,
i+(n-1)(a-c)$ is a complete residue system modulo $n$, and this is the
case if and only if $\gcd(a-c,n)=1$.
\end{proof}

The result of the corollary was first proved using different methods by Stolin in \cite{Stolin}. For example, the Lie algebra $\p(2,3\,\,|\,\,4,1)$ is
Frobenius since $2-4=-2$ is relatively prime to $7$.

At this time, the above line of reasoning does not easily extend to
compositions $\x$ and $\y$ with more than two components. However,
some calculations offer hope of producing more families of
Frobenius Lie algebras using methods similar to those above.

\subsection{Submaximal parabolic algebras}\label{sec:submaximal}
We conclude with another new family of Frobenius algebras. These are
of the form $\p(a,b,c\,\,|\,\,n)$, so they are parabolic algebras
omitting exactly two simple roots. We use a different technique than
for maximal or opposite maximal algebras to analyze this family. Our
result is the following classification theorem.

\begin{theorem}\label{submaximal} The submaximal parabolic algebra $\p(a,b,c\,\,|\,\,n)$ is Frobenius if and only if $\gcd(a+b,b+c)=1$.
\end{theorem}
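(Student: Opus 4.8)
The plan is to reduce the statement to a condition on the meander permutation $\sigma_{\x,\y}$ via Theorem~\ref{cycle}, just as in the maximal and opposite-maximal cases, but here the bottom composition is still $\y=n$ while the top has three parts, so the top bijection $t$ is piecewise with three branches. First I would write down $t$ and $b$ explicitly: $b(i)=n+1-i$ as before, and $t$ reflects within each block, so $t(i)=a+1-i$ on $[1,a]$, $t(i)=2a+b+1-i$ on $[a+1,a+b]$, and $t(i)=2a+2b+c+1-i = 2(a+b)+c+1-i$ on $[a+b+1,n]$ (using $a+b+c=n$). Composing, $\sigma_{\x,\y}(i)=t(n+1-i)$ splits into three cases according to which block $n+1-i$ lands in, i.e.\ according to which block $i$ lies in when counted from the right. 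Unlike the two-part case, the resulting map is \emph{not} a single translation mod $n$; instead it is a piecewise translation (an ``interval exchange''-type map on $\mathbb{Z}/n$) with one shift on the top $c$ vertices and a different shift on the rest — the two shift constants being governed by $a+b$ and $b+c$ respectively, which is where the gcd in the statement must come from.

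The key step is then to determine exactly when this piecewise-translation permutation is an $n$-cycle. I would organize the vertex set $\{1,\dots,n\}$ into the ``right-reading'' blocks $C=\{1,\dots,c\}$, $B=\{c+1,\dots,c+b\}$, $A=\{c+b+1,\dots,n\}$ (so that $n+1-i\in[a+b+1,n]$ iff $i\le c$, etc.), and compute $\sigma$ on each: on $C$ one gets $i\mapsto i+(a+b)$ type behavior, on $A\cup B$ one gets $i\mapsto i-(b+c)$ type behavior, all mod $n$ after suitable normalization. The cleanest route is probably to track a single orbit: starting from a vertex, each application adds either $+(a+b)$ or $-(b+c)\equiv a$ mod $n$ depending on the current block, and since $(a+b)+(b+c) = n+b \equiv b \pmod n$ is not the relevant quantity — rather the two increments are $a+b$ and $-(b+c)$, whose difference is $a+b+b+c = n+b$, hmm — so I would instead argue combinatorially that the orbit structure of $\sigma$ is isomorphic to the orbit structure of the rotation by $\gcd(a+b,b+c)$ on a cycle, or directly count components. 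An alternative, possibly slicker, approach: exhibit the meander $M(a,b,c\mid n)$ directly and show its path/cycle decomposition has a single component iff $\gcd(a+b,b+c)=1$, by noting that the top arcs pair $i\leftrightarrow$ (partner in its block) and the bottom arcs pair $i\leftrightarrow n+1-i$, so following the path amounts to a billiard bouncing between the left wall, the right wall, and the two internal block boundaries at positions $a$ and $a+b$.

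I expect the main obstacle to be the bookkeeping of the piecewise map and proving the ``single $n$-cycle iff $\gcd=1$'' equivalence cleanly — specifically, identifying the right invariant. The natural guess is that the number of cycles of $\sigma$ (equivalently, by Theorem~\ref{cycle} and the Dergachev--Kirillov theorem, essentially the index up to the cycle/component correction) equals $\gcd(a+b, b+c)$, and that the slickest proof is to find an explicit $\sigma$-equivariant bijection from $\mathbb{Z}/n$ (with this piecewise translation) to $\mathbb{Z}/n$ with an honest translation by $a+b$ or by $a$. If such a conjugacy exists, the problem collapses to the already-handled residue-system argument with modulus involving $\gcd(a+b,n)$; but since $\gcd(a+b,n)=\gcd(a+b,a+b+c)=\gcd(a+b,c)$ and similarly $\gcd(b+c,n)=\gcd(b+c,a)$, and the claimed answer is $\gcd(a+b,b+c)$, the conjugacy cannot be to a pure translation and some genuinely two-step analysis is unavoidable. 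I would therefore fall back on a direct induction or a Euclidean-algorithm argument on the pair $(a+b,\,b+c)$: show that a move in the meander reduces this pair the way the subtractive Euclidean algorithm does, so the number of components is the terminal value $\gcd(a+b,b+c)$, giving a single path exactly when that gcd is $1$.
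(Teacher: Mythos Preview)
Your proposal is a plan, not a proof, and it contains a concrete error in the one computation you do carry out. You assert that on $A\cup B$ the map behaves like $i\mapsto i-(b+c)$; in fact the composition $\sigma(i)=t(n+1-i)$ is a genuine \emph{three}-piece interval exchange: on $[1,c]$ it is $i\mapsto i+(a+b)$, on $[c+1,c+b]$ it is $i\mapsto i+(a-c)$, and on $[c+b+1,n]$ it is $i\mapsto i-(b+c)$. The middle shift $a-c$ is not congruent to $-(b+c)$ modulo $n$ unless $n\mid a+b$, so your two-shift picture collapses and with it the hope of reducing to a rotation-style residue argument. Your fallback --- a Euclidean/Rauzy-type induction on the pair $(a+b,\,b+c)$ --- is in fact the right instinct and can be made to work (it is essentially Panyushev's reduction), but you have not stated the reduction step, checked that it preserves the relevant gcd, or verified the base case, so nothing is actually proved.

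The paper takes a completely different route and never touches the meander permutation here. Instead it argues directly on the graph: assuming $\gcd(a+b,b+c)=1$, it first shows (Lemma~\ref{twoones}) that the meander has exactly two degree-one vertices, so there is a unique path component $P$ on $n'=a'+b'+c'$ vertices, and any remaining components are cycles. The key idea is then to look at the \emph{intervals} between consecutive vertices of $P$ and count how many cycle vertices sit in each. Following top and bottom edges forces these counts to propagate: except at two ``dead ends'' (Lemma~\ref{twodeadends}), every interval is matched to another, and one shows every interval carries the same number $d$ of cycle vertices (with possibly $d_1\le d$ beyond each end). A direct count then gives $a+b=(a'+b')(d+1)$ and $b+c=(b'+c')(d+1)$, so $d+1$ divides $\gcd(a+b,b+c)=1$, forcing $d=0$ and hence no cycles. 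This interval-propagation argument is the idea you are missing; it is more elementary than either the IET analysis or the inductive reduction, and it explains transparently why the method breaks for four or more top blocks (more dead ends appear, and the propagation no longer forces a uniform $d$).
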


We first establish some conditions on the degrees
of the vertices $\{v_{1}, v_{2}, \dots, v_{n}\}$ of the meander $M=M(a,b,c\,\,|\,\,n)$. Since the
vertices of $M$ are viewed as the numbers $\{1,2,\ldots,n\}$ on a
line the \emph{interval} between vertices $v_{i}$ and $v_{i + 1}$
makes sense.

\begin{lemma}\label{twoones} Suppose $\gcd(a+b,b+c)=1$. Then
there are exactly two vertices of degree $1$ in $M$ and all other vertices have degree $2$.
\end{lemma}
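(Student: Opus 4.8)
The plan is to understand the degree of each vertex $v_k$ in $M = M(a,b,c\,\,|\,\,n)$ by separately counting its top edge and its bottom edge. Since $\underline y = n$ is a single block, the bottom arcs connect $i$ to $n+1-i$; the only vertex lacking a bottom edge is the center $v_{(n+1)/2}$, which exists exactly when $n$ is odd. For the top edges, $\underline x = (a,b,c)$ splits $\{1,\dots,n\}$ into the three blocks $\{1,\dots,a\}$, $\{a+1,\dots,a+b\}$, $\{a+b+1,\dots,n\}$ (using $a+b+c=n$), and within each block the arcs pair $i$ with the mirror image of $i$ across the block's center. A vertex fails to get a top edge precisely when it is the center of a block of odd size, i.e. when $a$ is odd (center $v_{\lceil a/2\rceil}$), when $b$ is odd (center $v_{a+\lceil b/2\rceil}$), or when $c$ is odd (center $v_{a+b+\lceil c/2\rceil}$). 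So every vertex has degree $1$ or $2$, and the degree-$1$ vertices are exactly those that are either the center of the whole bottom block or the center of one of the three top blocks.

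First I would translate the hypothesis $\gcd(a+b,b+c)=1$ into a parity statement. Note $(a+b)+(b+c) = a+c+2b \equiv a+c \pmod 2$, and $n = a+b+c$. If $a+b$ and $b+c$ were both even then $\gcd \geq 2$, contradiction; so not both of $a+b,b+c$ are even, hence not all of $a,b,c$ have the same parity in a way that makes both sums even — more precisely, at least one of $a+b$, $b+c$ is odd, which forces exactly one or exactly three of $\{a,b,c\}$ to be odd (the case "exactly three odd" gives $n$ odd and "exactly one odd" also gives $n$ odd), while "exactly two odd" would make one of the two sums even and the other odd — I need to check that case is also excluded. Actually the clean statement to extract is: \emph{under $\gcd(a+b,b+c)=1$, the number of odd elements among $a,b,c$ has the correct parity so that the total count of degree-$1$ vertices (from the three top centers plus possibly one bottom center) is exactly two.} A short case analysis on the parities of $a,b,c$ — there are eight cases, collapsing by the constraint — should show: if $n$ is even then $a+b$ and $b+c$ cannot both be... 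I would carefully tabulate, using that each odd top-block contributes one degree-1 vertex and the bottom contributes one iff $n$ is odd, and that two such "deficient" vertices could a priori coincide, which must be ruled out.

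The main obstacle I anticipate is exactly this last point: ensuring the degree-$1$ vertices are \emph{distinct} and that there are \emph{exactly} two of them, not zero or four. Ruling out four is the parity bookkeeping above (I expect $\gcd(a+b,b+c)=1$ to force precisely two of the four potential deficient vertices — three top centers and one bottom center — to actually be present, by a parity argument on $a$, $b$, $c$, $n$). Ruling out coincidences requires checking that, e.g., the bottom center $v_{(n+1)/2}$ does not equal a top block center: if it did, one computes that a block boundary $a$ or $a+b$ would satisfy a relation like $a \equiv 0$ or $a+b \equiv 0$ forcing a common factor, contradicting $\gcd(a+b,b+c)=1$; similarly two top centers coincide only if two blocks are "stacked" at the same point, impossible since the blocks are disjoint intervals. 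I would organize the argument as: (1) each vertex has degree $1$ or $2$; (2) list the at most four candidate degree-$1$ vertices with their exact positions; (3) parity analysis from the hypothesis pins the count of present candidates to two; (4) a direct position computation shows no two coincide. Step (3) is the heart; steps (1), (2), (4) are routine once the setup is written down.
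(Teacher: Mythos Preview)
Your approach is the paper's approach, but the parity analysis in your step (3) goes off the rails. You assert that ``at least one of $a+b,b+c$ is odd'' forces exactly one or exactly three of $a,b,c$ to be odd, and that the case of exactly two odd must be excluded. Both statements are false: take $a,c$ odd and $b$ even, so that $a+b$ and $b+c$ are both odd and exactly two of $a,b,c$ are odd. This case is \emph{allowed}, not excluded. The correct tabulation is short: $\gcd(a+b,b+c)=1$ rules out precisely the cases where $a+b$ and $b+c$ are both even, i.e.\ $a,b,c$ all even or all odd. In every remaining case either exactly one of $a,b,c$ is odd (and then $n=a+b+c$ is odd) or exactly two are odd (and then $n$ is even); in both situations exactly two of the four numbers $a,b,c,n$ are odd, hence exactly two deficient positions arise. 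This is exactly how the paper argues, splitting on the parity of $n$.

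Your step (4) is the degree-$0$ check in disguise, and it is simpler than you suggest. Two top-block centers lie in disjoint intervals and can never coincide, so the only possible coincidence is the bottom center $v_{(n+1)/2}$ with a top center. Equating $(n+1)/2$ with the $a$-block center $(a+1)/2$ or the $c$-block center $a+b+(c+1)/2$ forces $b+c=0$ or $a+b=0$ respectively, so (for genuine three-part compositions) the only live case is coincidence with the $b$-block center $a+(b+1)/2$, which gives $a=c$ and hence $a+b=b+c$, contradicting $\gcd(a+b,b+c)=1$. The paper does this degree-$0$ elimination first and then counts parities; you may find the argument cleaner in that order.
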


\begin{proof} Suppose for a moment that there exists a
vertex $v$ of degree $0$.  This vertex must have no bottom edge,
meaning that $n$ is odd and $v = v_{(n + 1)/2}$.  We also know $v$ has
no top edge so $b$ is odd and $v$ is halfway between $v_{a + 1}$ and
$v_{a + b}$.  This implies that $a = c$ so $a + b = b + c$, a
contradiction.  Hence, we get exactly one vertex of degree $1$ for
each integer in $\{ a, b, c, n\}$ which is odd.

If $n$ is odd, the vertex $v_{(n + 1)/2}$ has degree $1$.  If all
three of $a$, $b$ and $c$ are odd, then $a + b$ and $b + c$ are both
even, meaning they have a common factor of $2$, a contradiction.  This
implies that exactly one of $a$, $b$ or $c$ must be odd.  Then there
is exactly one other vertex of degree one as desired.

If $n$ is even, the bottom edges form a perfect matching.  If all
three of $a$, $b$ and $c$ are even, then $a + b$ and $b + c$ are again
even, a contradiction.  This implies that exactly two of $a$, $b$ or
$c$ are odd, meaning there are two vertices of degree $1$ as
desired.
\end{proof}

By Lemma \ref{twoones}, one component of $M$ must be a path and there
are possibly more components which are all cycles.  Let $P$ be this
path and suppose $P$ has $a' \leq a$ vertices in the first part of the
partition, and $b' \leq b$ and $c' \leq c$ vertices in the other parts
respectively.  Note that one of $a', b'$ or $c'$ may be zero.  Label
the vertices of $P$ with $u_{1}, u_{2}, \dots, u_{n'}$ where $n' =
|P|$ following the inherited order (the order of the labels $v_{i}$)
of the vertices.  Notice that the path $P$ forms a meander graph on
its own.  This means that, by the proof of Lemma \ref{twoones}, we know that exactly two of the integers in
$\{a', b', c', n'\}$ are even and two are odd.

Now suppose there exists at least one component of $M$ that is a
cycle.  Let $C$ be the set of all vertices in cycles of $M$.  Suppose
$C$ has $d$ vertices in the interval between $u_{i}$ and $u_{i + 1}$.
For the moment, let us suppose that $i \neq \frac{n'}{2}$.  Following
the bottom edges, this means that $C$ must also have $d$ vertices in
the interval between $u_{n' - i}$ and $u_{n' - i + 1}$.  Using this
argument, we will show that $C$ has $d$ vertices in almost every
interval.

Define a \emph{dead end} in $M$ to be an interval $u_{i}$ to $u_{i + 1}$ such
that $M$ contains an edge joining $u_i$ and $u_{i+1}$. In particular,
if $n'$ is even, then the interval between $u_{n'/2}$ and $u_{n'/2 + 1}$
is a dead end.

\begin{lemma}\label{twodeadends}
Suppose $\gcd(a+b,b+c)=1$. Then there are exactly two dead ends in $M$.
\end{lemma}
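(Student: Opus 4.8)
The plan is to analyze the structure of the meander $M = M(a,b,c\,\,|\,\,n)$ by exploiting the symmetry imposed by the single bottom composition $\underline y = n$. Since $\underline y = (n)$, every bottom edge joins vertex $v_i$ to vertex $v_{n+1-i}$; that is, the bottom edges realize the reflection $i \mapsto n+1-i$ of the vertex set. This reflection symmetry is the engine of the whole argument. A dead end is an interval $u_i$ to $u_{i+1}$ (along the path $P$) that is spanned by an edge of $M$, and since $P$ is itself a meander graph on its vertex set $\{u_1,\dots,u_{n'}\}$ by the remarks preceding the lemma, I would first reduce the claim to a statement purely about the path $P$: show that $P$, viewed as a meander, has exactly two dead ends, and that no dead end of $M$ is hidden inside a cycle component or between a cycle vertex and a path vertex.

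First I would establish that every dead end must be an interval between two consecutive path vertices $u_i, u_{i+1}$ such that $M$ contains an edge between them — but crucially, by the ``$d$ vertices in each interval'' argument already developed in the text, the vertices lying in cycles are distributed symmetrically with respect to the reflection $i \mapsto n+1-i$, so the intervals of $M$ that contain cycle vertices come in symmetric pairs and, most importantly, an edge spanning $u_i$ to $u_{i+1}$ in $M$ forces there to be \emph{no} cycle vertices strictly between them. Thus a dead end of $M$ corresponds exactly to an edge of $P$ (top or bottom) between consecutive vertices of $P$. Next I would count these directly in $P$: a bottom edge of $P$ joining consecutive vertices $u_i, u_{i+1}$ happens precisely at the ``center'' of the bottom matching, i.e. when $\{u_i, u_{i+1}\}$ straddles the midpoint $n/2$ of the line — this yields at most one bottom dead end, and it occurs exactly when $n'$ (equivalently the relevant parity) is even, as already noted in the paragraph defining dead ends. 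Symmetrically, a top edge of $P$ joining consecutive vertices occurs at the center of one of the top blocks; since the top composition is $(a,b,c)$, the only place a top arc can join two adjacent vertices is at the middle of whichever blocks have even size restricted to $P$, and by Lemma~\ref{twoones}'s proof exactly two of $\{a',b',c',n'\}$ are even. Combining: exactly two of these four ``center'' conditions are met, giving exactly two dead ends.

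The main obstacle I anticipate is the bookkeeping in the second step: making rigorous the claim that an edge of $M$ spanning an interval between two path vertices cannot enclose any cycle vertices, and conversely that the symmetric propagation of ``$d$ vertices per interval'' never produces a spurious extra dead end at the exceptional interval around $i = n'/2$. This requires carefully tracking the single exceptional interval (the one the text flags with ``let us suppose that $i \neq n'/2$'') and checking that the parity constraint $\gcd(a+b, b+c) = 1$ — which by Lemma~\ref{twoones} forces exactly two of $\{a,b,c,n\}$ (hence, after restricting to $P$, exactly two of $\{a',b',c',n'\}$) to be even — lines up the even-block centers and the midpoint of the line so that precisely two dead ends survive, not zero and not four. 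Once the parity dictionary between $\{a,b,c,n\}$ and $\{a',b',c',n'\}$ is pinned down, the count is forced, and the proof concludes.
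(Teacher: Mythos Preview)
Your core idea --- that dead ends correspond bijectively to the even entries among $\{a',b',c',n'\}$, of which exactly two are even by the remark preceding the lemma --- is correct and is exactly the paper's proof, which occupies three sentences. The paper simply observes that a dead end is an edge of $M$ joining consecutive $u$-vertices; since both endpoints lie in the component $P$, the edge lies in $P$; and since $P$ is already known to be the meander $M(a',b',c'\,|\,n')$, an edge between consecutive $u$-vertices sits at the center of an even block.

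Your detour through cycle vertices, however, is both unnecessary and potentially circular. It is unnecessary because the definition of dead end already restricts attention to an edge between two vertices of $P$, so the edge is automatically in $P$; whether or not cycle vertices occupy the interval is irrelevant to the count. It is potentially circular because the full ``$d$ vertices in every interval'' propagation is carried out only in the proof of Theorem~\ref{submaximal}, \emph{after} this lemma and \emph{using} this lemma; invoking it here would be a forward reference to a conclusion that depends on what you are proving. (And your subsidiary claim that an edge from $u_i$ to $u_{i+1}$ forces no cycle vertices between them is not true a priori --- it only becomes true once one knows $d=0$, which is the endgame of the theorem.) Drop the cycle-vertex bookkeeping entirely and your proof collapses to the paper's.
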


\begin{proof}
A dead end is formed by two consecutive vertices of $P$ which are
adjacent. Each occurrence of a dead end coincides with one of $a',b',c'$
or $n'$ being even, and we know that exactly two of these are
even. Thus, there are exactly two dead ends and the proof is complete.
\end{proof}

\begin{proof}[Proof of Theorem \ref{submaximal}]

Call an interval a \emph{partition interval} if it is the meeting
point of two parts of our partition.  Namely, the partition intervals
are from $u_{a'}$ to $u_{a' + 1}$ and from $u_{a' + b'}$ to $u_{a' +
  b' + 1}$.  Now suppose $C$ has $d$ vertices in the interval from
$u_{i}$ to $u_{i + 1}$.  For the moment we suppose this interval is
not a dead end.  As mentioned before, this means that, by following
bottom edges, $C$ must also have $d$ vertices in the interval from
$u_{n' - i}$ to $u_{n' - i + 1}$.  Also, by following top edges, $C$
must have $d$ vertices in another interval (depending where the top
edges go).

If the interval from $u_{i}$ to $u_{i + 1}$ happens to be one of the
two partition intervals (for example suppose $i = a'$) then this means
$C$ must have $d_{1}$ vertices in the interval outside $u_{1}$ and at
least $d_{2}$ vertices in the interval from $u_{a' + b'}$ to $u_{a' +
  b' + 1}$ where $d_{1} + d_{2} = d$.  This then implies that $C$ has
$d_{1}$ vertices in the interval beyond $u_{n}$ (following bottom
edges) and another $d_{1}$ vertices in the interval from $u_{a' + b'}$
to $u_{a' + b' + 1}$ (following top edges) for a total of $d$ vertices
in the interval $u_{a' + b'}$ to $u_{a' + b' + 1}$. See
Figure~\ref{meanderfig} for an example.  In this figure, the dark
lines represent the edges of $P$ while light lines represent edges of
$C$.  The unlabeled light lines represent $d$ edges each.  Here $n' =
7$, $a' = 2$, $b' = 2$ and $c' = 3$.

\begin{figure}[ht!]
\begin{center}
 \epsfbox{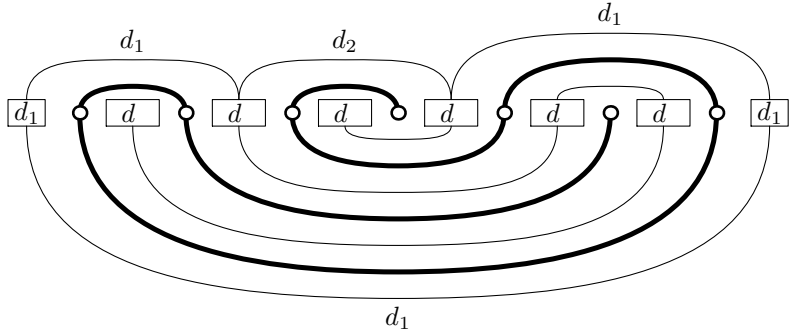}
 \caption{$M(2, 2, 3\,\,|\,\,7)$ with inserted cycle. \label{meanderfig}}
\end{center}
\end{figure}

Alternating following top and bottom edges, we see that the cycle $C$
has exactly $d$ vertices in every interval between vertices and
possibly $d_{1} \leq d$ vertices on each end beyond $u_{1}$ and beyond
$u_{n'}$.  Carefully counting, we see that the first part of our
partition has $a = a' + 2d_{1} + (a' - 1)d$ vertices.  Similarly, the
second part has $b = b' + 2d_{2} + (b' - 1)d$ and the third part has
$c = c' + 2d_{1} + (c' - 1)d$.  This means that $a + b = (a' + b')(d +
1)$ and $b + c = (b' + c')(d + 1)$ and these have a common factor of
$d + 1$, a contradiction.  This shows that $C$ must be empty so $G$ is
simply the path $P$.
\end{proof}

The following is an example to show that this argument does not work
when we break $n$ into more pieces.  Consider the meander $M = M(3, 2, 2, 2 \,\,|\,\, 9)$ pictured in Figure~\ref{meanderc-ex}.

\begin{figure}[ht!]
\begin{center}
 \epsfbox{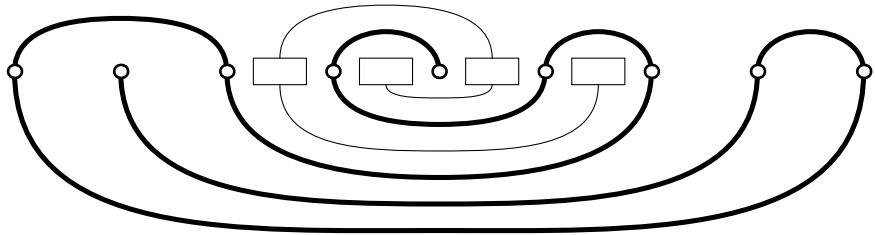}
 \caption{$M(3, 2, 2, 2\,\, |\,\, 9)$ with inserted cycle. \label{meanderc-ex}}
\end{center}
\end{figure}

Here we have broken the top into
$4$ pieces while leaving the bottom in one piece.  Notice that we can
add a cycle to this structure which does not pass through all the
intervals.  This happens because, as the number of pieces we have
increases, the number of dead ends also increases, allowing more
flexibility in the placement of the cycles.

The above illustrates the complexity of the meander graphs
$M(\x\,\,|\,\,\y)$ as the number of parts of $\x$ and $\y$ grow. At
the moment, the problem of classifying all Frobenius seaweed Lie
algebras seems to be out of reach. Of late, there has been a great
deal of interest in Frobenius Lie algebras. Perhaps these recent
developments will be instrumental in the development of a
classification theory.


\begin{thebibliography}{100}\frenchspacing\raggedright
\small

\bibitem{BD}
A.~A. Belavin and V.~G. Drinfel'd.
\newblock {Solutions of the classical Yang--Baxter equations for simple Lie
  algebras}.
\newblock {\em Funct. Anal. Appl.}, 16:159--180, 1982.

\bibitem{DK}
V.~Dergachev and A.~Kirillov.
\newblock {Index of Lie algebras of seaweed type}.
\newblock {\em J. Lie Theory}, 10:331--343, 2000.

\bibitem{Elashvili2}
A.~G. Elashvili.
\newblock {On the index of parabolic subalgebras of semisimple Lie algebras}.
\newblock preprint.


\bibitem{Elashvili}
A.~G. Elashvili.
\newblock {Frobenius Lie algebras}.
\newblock {\em Funktsional. Anal. i Prilozhen.}, 16:94--95, 1982.

\bibitem{GG:Boundary}
M.~Gerstenhaber and A.~Giaquinto.
\newblock {Boundary solutions of the classical Yang-Baxter equation}.
\newblock {\em Letters Math. Phys.}, 40:337--353, 1997.

\bibitem{GG:Frob}
M.~Gerstenhaber and A.~Giaquinto.
\newblock {Graphs, Frobenius functionals, and the classical Yang-Baxter equation}.
\newblock {arXiv:0808.2423v1 [math.QA], August, 2008.}



\bibitem{Joseph}
A.~Joseph.
\newblock {On semi-invariants and index for biparabolic (seaweed) algebras, I}.
\newblock {\em J. of Algebra}, 305:487--515, 2006.

\bibitem{Khoroshkin}
S.~M.~Khoroshkin, I.~I.~Pop, M.~E.~Samsonov, A.~A.~Stolin, and V.~N.~Tolstoy,
\newblock {On some Lie bialgebra structures on polynomial
algebras and their quantization}.
\newblock {\em Comm. Math. Phys.}, 282 (2008), 625–662


\bibitem{Ooms}
A.~I. Ooms.
\newblock {On Lie algebras having a primitive universal enveloping algebra}.
\newblock {\em J. Algebra}, 32:488--500, 1974.


\bibitem{Panyushev}
D.~Panyushev.
\newblock{Inductive formulas for the index of seaweed Lie algebras}.
\newblock{\em Mosc. Math. Journal}, 2:2001, 221-241.

\bibitem{Stolin}
A.~Stolin.
\newblock{On rational solution of Yang-Baxter equation for $\mathfrak{sl}(n)$}.
\newblock{\em Math Scand.}, 69 (1991), 57–80.

\bibitem{Tauvel-Yu}
P.~Tauvel and R.~W.~T.~Yu.
\newblock{Sur l'indice de certaines algebres de Lie}.
\newblock{\em Ann. Inst. Fourier (Grenoble)}, 54:2004, 1793-1810.

\end{thebibliography}
\end{document}